\newcommand{\rl}{{\mathbb{R}}}
\newcommand{\cx}{{\mathbb{C}}}
\newcommand{\Z}{\mathbb{Z}}
\theoremstyle{plain}
\newtheorem*{theorem*}{Theorem}
\newtheorem{theorem}{Theorem}[section]
\newtheorem{lemma}{Lemma}[section]
\theoremstyle{remark}
\newtheorem*{remark}{Remark}
\newcommand{\Bl}{\mathbf{B}_{\lambda}}
\newcommand{\Blt}{\mathbf{B}_{\tilde{\lambda}}}
\newcommand{\Ht}{\mathbb{H}}
\newcommand{\BpHmu}{\mathbf{B}^{\mu}_{\mathbb{H}}}
\newcommand{\Dd}{{\mathbb{D}^{\ast}\times\mathbb{D}}}
\newcommand{\Ud}{\mathbb{D}}
\newcommand{\BpOm}{\mathbf{B}_{\Omega}^{\mu}}
\newcommand{\BkOm}{B_{\Omega}^{\mu}(z,w)}
\newcommand{\BpOnd}{\mathbf{B}_{\Omega}^{\nu(\delta)}}
\newcommand{\BpOnD}{\mathbf{B}_{\mathbb{D}}^{\nu(\delta)}}
\newcommand{\BpHnd}{\mathbf{B}_{\mathbb{H}}^{\nu(\delta)}}
\newcommand{\BkHnd}{B_{\mathbb{H}}^{\nu(\delta)}}
\title[Weighted Bergman Projections on the Hartogs Triangle]{Weighted Bergman Projections on the Hartogs Triangle: Exponential Decay}
\author{Liwei Chen}
\address[Liwei Chen]{The Ohio State University, Department of
Mathematics, Columbus, OH 43210}
\email{chen.1690@osu.edu}
\author{Yunus E. Zeytuncu}
\address[Yunus E. Zeytuncu]{University of Michigan - Dearborn, Department of Mathematics and Statistics, Dearborn, MI 48128}
\email{zeytuncu@umich.edu}
\subjclass[2010]{Primary  32A25; Secondary 32A07, 32A36}
\keywords{Weighted Bergman projection, exponential weights, Hartogs triangle}
\thanks{The work of the second author was partially supported by a grant from 
the Simons Foundation (\#353525).}
\begin{document}

\maketitle
\begin{abstract}
We study weighted Bergman projections on the Hartogs triangle in $\mathbb{C}^2$. We show that projections corresponding to exponentially vanishing weights have degenerate $L^p$ mapping properties. 
\end{abstract}

\section{Introduction}

Let $\Omega\subset\mathbb{C}^n$ be a bounded domain and $\mu$ be a non-negative function on $\Omega$. We say $\mu$ is an admissible weight if $L^2_a(\Omega, \mu)$, the space of square integrable holomorphic functions, is a closed subspace of $L^2(\Omega,\mu)$, the space of square integrable functions  with respect to $\mu(z)dV(z)$ where $dV(z)$ stands for the Lebesgue measure. The weighted Bergman projection $\BpOm$ is the orthogonal projection operator from $L^2(\Omega, \mu)$ onto $L^2_a(\Omega,\mu)$. It is an integral operator of the form 
$$\BpOm f(z)=\int_{\Omega}\BkOm f(w)\mu(w)dV(w)$$
for $f\in L^2(\Omega,\mu)$. We refer to \cite{KrantzSCVBook} and \cite{Pasternak90} for basic definitions and properties.
The analytic properties of the operator $\BpOm$ and kernel $\BkOm$ depend on the geometry of the domain $\Omega$ and the function theoretic properties of the weight $\mu$.

One particular investigation is relating the $L^p$ mapping properties of $\BpOm$ and the order of vanishing of the weight $\mu$ on the boundary of $\Omega$. One way of defining weights that vanish on the boundary is to take different compositions of a distance to the boundary function. In particular, let $\delta$ be a distance to the boundary function and $\nu(x): [0,\infty)\to[0,\infty)$ be a continuous function that only vanishes at $x=0$. Then the composition $\nu(\delta)$ is an admissible weight on $\Omega$ and one can study the $L^p$ regularity of $\BpOnd$ and relate it to the order of vanishing of $\nu(x)$ at $0$. Below we go over some known results using this notation. 

We start with the unit disc $\mathbb{D}$ in $\mathbb{C}$ and choose $\delta(z)=1-|z|^2$. Then we list the following results.
\begin{itemize}
\item If $\nu\equiv 1$ or $\nu(x)=x^k$ for some $k>0$, then the weighted projection operator $\BpOnD$ is bounded from $L^p(\mathbb{D}, \nu(\delta))$ to itself for all $p\in(1,\infty)$. See \cite{Zhu, ForelliRudin}.
\item On the other hand, if $\nu(x)=\exp\left(-\frac{1}{x}\right)$ then the  weighted projection operator $\BpOnD$ is bounded from $L^p(\mathbb{D}, \nu\left(\delta\right))$ to itself only for $p=2$. See \cite{Dostanic04, ZeytuncuTran}.
\end{itemize}

This change in $L^p$ mapping properties between the polynomial vanishing and the exponential vanishing has been detected on some Reinhardt domains too. In particular, let $\Omega$ be a smooth bounded complete Reinhardt domain of finite type in $\mathbb{C}^2$ and let $\rho$ be a smooth multi-radial defining function for $\Omega$. We choose $\delta=-\rho$. Then we recall the following results.
\begin{itemize}
\item If $\nu\equiv 1$ or $\nu(x)=x^t$ for some rational number $t>0$, then the weighted projection operator $\BpOnd$ is bounded from $L^p(\Omega, \nu(\delta))$ to itself for all $p\in(1,\infty)$. See \cite{McNeal94, Charpentier13, Charpentier14}.
\item On the other hand, if $\nu(x)=\exp\left(-\frac{1}{x}\right)$ then the  weighted projection operator $\BpOnd$ is bounded from $L^p(\Omega, \nu(\delta))$ to itself only for $p=2$. See \cite{CucZey}.
\end{itemize}

In addition to these contrasting results, there are more results in the polynomial decay case where the $L^p$ boundedness on the full interval $(1,\infty)$ is observed. See for example \cite{BonamiG} and \cite{ChangLi}. Therefore, the degenerate $L^p$ regularity for exponential weights on other domains arises as a natural question. More specifically, we pose the following question.\\

\textbf{Question:} Let $\Omega$ be a bounded domain with some additional geometric properties (e.g. finite type, convex, etc.) and $\delta$ be a distance to the boundary function. For $\nu(x)=\exp\left(-\frac{1}{x}\right)$, is the  weighted projection operator $\BpOnd$ bounded on $L^p(\Omega, \nu\left(\delta\right))$ for any $p\not=2$?\\

In this note, we investigate this question on the Hartogs triangle. In $\mathbb{C}^2$, the Hartogs triangle $\Ht$ is given by
\begin{equation*}
\Ht=\left\{ (z_1,z_2)\in \cx^2:~|z_2|<|z_1|<1\right\}.
\end{equation*}
The source of many counterexamples on $\Ht$ is the singular point at the origin. Hence, the natural choice of a distance function here is the distance to the singular point, that is, we set  \
$$\delta(z)=|z_1|.$$ 
For this choice of $\delta$, the polynomial decay case is already studied in \cite{Chen13, Chen14, ChakZey}. Although the $L^p$ boundedness does not hold for all $p\in(1,\infty)$, there is always a non-degenerate interval around $p=2$ where the weighted projection operator is $L^p$ bounded.

\begin{theorem*}[\cite{Chen14}] Let $\delta(z)=|z_1|$ and $\nu(x)=x^{t}$ where $t\in(0,\infty)$ with the unique decomposition $t=s+2k$ such that $k\in\mathbb{Z}$ and $s\in(0,2]$. Then the weighted Bergman projection $\BpHnd$ is bounded on $L^p(\Ht,\nu(\delta))$ if and only if $p\in\left(\frac{t+4}{s+k+1},\frac{t+4}{k+1}\right)$.
\end{theorem*}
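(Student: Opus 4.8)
The plan is to exploit the fact that the Hartogs triangle is biholomorphic to the product $\mathbb{D}^{\ast}\times\mathbb{D}$ via the map $\Phi(z_1,z_2)=(z_1,z_2/z_1)$, whose inverse $\Phi^{-1}(w_1,w_2)=(w_1,w_1w_2)$ has complex Jacobian $w_1$. First I would check that the weight $\nu(\delta)=|z_1|^{t}$ combines with the Jacobian so that $f\mapsto f\circ\Phi^{-1}$ is an isometry of $L^p(\Ht,|z_1|^{t})$ onto $L^p(\mathbb{D}^{\ast}\times\mathbb{D},|w_1|^{t+2})$ simultaneously for \emph{every} $p$, not merely for $p=2$, so that $\BpHnd$ is isometrically equivalent on each $L^p$ to the weighted projection on the product with weight $|w_1|^{t+2}$. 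Since this weight depends only on $w_1$, the projection factors as the tensor product of the unweighted Bergman projection on the $w_2$–disc, which is $L^p$–bounded for all $p\in(1,\infty)$, and the weighted projection on the punctured disc $\mathbb{D}^{\ast}$ with weight $|w_1|^{t+2}$. The whole problem thus collapses to a one–variable question on $\mathbb{D}^{\ast}$.

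Next I would write the one–variable kernel explicitly. The monomials $w^{n}$ form an orthogonal basis of the Bergman space exactly for integers $n\ge -k-2$, since $\|w^{n}\|^{2}\propto(2n+t+4)^{-1}$ and $2n+t+4>0\Leftrightarrow n\ge -k-2$; this cutoff is precisely where the decomposition $t=s+2k$ enters. Summing the reproducing series yields the closed form
\[
\frac{1}{2\pi}\,\frac{(z\bar w)^{-k-2}\bigl(s+(2-s)z\bar w\bigr)}{(1-z\bar w)^{2}} .
\]
The factor $(z\bar w)^{-k-2}$ records the blow–up permitted at the singular point and is the source of the difficulty. The key device is to absorb it: conjugation by the multiplication $f\mapsto w^{k+2}f$, an isometry of $L^p(\mathbb{D}^{\ast},|w|^{t+2})$ onto $L^p(\mathbb{D},|w|^{\beta})$ with $\beta=t+2-(k+2)p$, transforms the operator into the genuine weighted Bergman projection $\mathbf{B}_{\mathbb{D}}^{|w|^{s-2}}$ on the full disc, now regarded as acting on the $p$–dependent space $L^p(\mathbb{D},|w|^{\beta})$. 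After this step the kernel is the clean $\tfrac{1}{2\pi}(s+(2-s)z\bar w)(1-z\bar w)^{-2}$, whose modulus is comparable to $|1-z\bar w|^{-2}$, and the dependence on $p$ has been linearized into the single exponent $\beta$, which is what makes the bookkeeping sharp.

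For sufficiency I would run the weighted Schur test on $L^p(\mathbb{D},|w|^{\beta})$ with $h(w)=|w|^{-\gamma}(1-|w|^{2})^{-\sigma}$, evaluating the integrals by the Forelli–Rudin estimates for $\int_{\mathbb{D}}|w|^{a}(1-|w|^{2})^{b}|1-z\bar w|^{-2}\,dV(w)$. The factor $(1-|w|^{2})^{-\sigma}$ only needs to defeat the logarithmic growth of these integrals at the outer boundary $|w|\to1$, which any small $\sigma>0$ with $0<\sigma p,\sigma p'<1$ achieves for every $p$; thus the outer boundary — being far from the singular point — never obstructs. The binding constraints all come from the puncture $w=0$: convergence of the $w$–integral there caps $\gamma p'$, the matching of origin powers floors $\gamma p$, and the conjugate conditions do the same with $p$ and $p'$ interchanged. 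Feeding $\beta=t+2-(k+2)p$ into these origin inequalities and demanding that a common admissible $\gamma$ exist is exactly the requirement that $p$ lie in the open interval $\bigl(\tfrac{t+4}{s+k+1},\tfrac{t+4}{k+1}\bigr)$, and this is where the two endpoints are produced.

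For necessity I would argue that the puncture is also where boundedness first fails, so that the interval is not merely sufficient. Heuristically, after passing to logarithmic polar coordinates the conjugated operator is, near $w=0$, a convolution with a homogeneous kernel whose Mellin symbol is an explicit meromorphic function finite precisely on the stated $p$–range; this predicts the endpoints and, more concretely, tells me which extremal family to use. I would therefore test $\BpHnd$ on functions $f_j$ concentrating at the singular point with exponents approaching the critical $\gamma$, and show that $\|\BpHnd f_j\|_p/\|f_j\|_p\to\infty$ as $p$ approaches either endpoint, the divergence coming from the same origin integral that was borderline in the Schur computation. The main obstacle throughout is this interaction at the singular point: the order-$(k+2)$ pole in the kernel is what couples the weight exponent to $p$, and the real work is the careful tracking of how that pole order and the weight conspire to pin down the two endpoints — the conjugation trading the pole for the auxiliary weight $|w|^{\beta}$ is the maneuver that makes both halves of the argument tractable and exact.
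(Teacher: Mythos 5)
The paper does not actually prove this statement---it is quoted from \cite{Chen14}---so your proposal should be measured against the method of that paper and against the machinery the present paper deploys in Sections 2--3. Your route is essentially that method: transfer to $\mathbb{D}^{*}\times\mathbb{D}$ via $\Phi$, absorb the Jacobian $|w_1|^{2}$ into the weight so that $f\mapsto f\circ\Phi^{-1}$ is a $p$-independent isometry that conjugates the projections, split off the bounded unweighted disc factor, compute the punctured-disc kernel from the monomial basis (your closed form agrees with the one displayed in Section 3), conjugate by $w^{k+2}$ to trade the pole for a $p$-dependent power weight, and finish with a weighted Schur test. All of these structural steps are sound. One simplification: the necessity half does not need Mellin analysis or extremal families; boundedness forces each Fourier-mode projection $f\mapsto c_m z^m\int \overline{w}^m f\,|w|^{t+2}\,dV$ to be bounded, hence $w^{-k-2}\in L^{p}\cap L^{p'}(|w|^{t+2}dV)$, and this already produces both endpoints. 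That is exactly the monomial testing the paper carries out in Section 2 for the exponential weight.

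The one genuine gap is the endpoint bookkeeping, which you assert rather than compute. Carrying \emph{your own} normalization to completion, the cutoff $n\ge -k-2$ yields the conditions $-(k+2)p+t+4>0$ and $-(k+2)p'+t+4>0$, i.e.\ $\max(p,p')<\frac{t+4}{k+2}$, which is the interval $\left(\frac{t+4}{s+k+2},\frac{t+4}{k+2}\right)$ rather than $\left(\frac{t+4}{s+k+1},\frac{t+4}{k+1}\right)$. Concretely, for $t=2$ (so $s=2$, $k=0$) the basis element $w^{-2}$ belongs to $L^{p}(|w|^{4}dV)$ only for $p<3$, so the projection cannot be bounded at $p=5$ even though $5$ lies in the printed interval $(2,6)$; note also that the printed interval contains $p=2$ only if $s>2$, which never happens, contradicting trivial $L^{2}$-boundedness and the paper's own remark that the interval surrounds $2$. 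So the statement as quoted appears to carry an off-by-one in the denominators relative to the decomposition $t=s+2k$, and your claim that the Schur constraints ``exactly'' reproduce the displayed endpoints is false as written: your method is correct and would land on $\left(\frac{t+4}{s+k+2},\frac{t+4}{k+2}\right)$, and you need either to derive the endpoints honestly from your setup or to flag the discrepancy with the quoted statement.
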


Note that for any $t>0$, there is an interval around $2$ where the weighted Bergman projection is $L^p$ bounded. Moreover, this interval gets smaller as $t$ gets bigger. In other words, higher order vanishing of $\nu(x)$ indicates smaller $L^p$ boundedness range. In the light of this observation we answer the question above as follows.

\begin{theorem}\label{main}
Let $\nu(x)=\exp\left(-\frac{1}{x}\right)$, $\delta(z)=|z_1|$, and define the exponentially decaying weight
\begin{equation*}
\mu(z)=\nu(\delta(z))=\exp\left(-\frac{1}{|z_1|}\right).
\end{equation*}
Then the weighted Bergman projection $\BpHmu$ is bounded on $L^p(\Ht,\mu)$ if and only if $p=2$.
\end{theorem}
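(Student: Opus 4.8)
The case $p=2$ is immediate: $\BpHmu$ is the orthogonal projection of the Hilbert space $L^2(\Ht,\mu)$ onto its closed subspace $L^2_a(\Ht,\mu)$, hence has norm one. All the content is the failure of boundedness for $p\neq 2$, and my plan is to exhibit, for each such $p$, an explicit one-parameter family of test functions along which the quotient $\|\BpHmu f\|_{L^p(\mu)}/\|f\|_{L^p(\mu)}$ diverges. The guiding idea is that on a Reinhardt domain with a radial weight the projection diagonalizes against monomials, so after restricting to a single angular frequency the operator becomes a rank-one map whose $L^p$ norm is an explicit ratio of radial moments.

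First I would set up the diagonalizing basis. Since $\Ht$ is Reinhardt and $\mu$ depends only on $|z_1|$, the monomials $z_1^j z_2^k$ are mutually orthogonal in $L^2(\Ht,\mu)$, and via the biholomorphism $(z_1,z_2)\mapsto(z_1,z_2/z_1)$ onto $\Dd$ one checks that the holomorphic monomials square-integrable against $\mu$ are exactly $\{z_1^j z_2^k : j\in\Z,\ k\ge 0\}$, which I would also argue form a complete system. The structural point, and the reason an exponential weight behaves differently from a polynomial one, is that the factor $\exp(-1/|z_1|)$ forces convergence of every radial moment, so all negative powers $z_1^j$, $j<0$, become admissible. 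Writing $\omega(r)=r^3e^{-1/r}$ and $J(\beta)=\int_0^1 r^{\beta+3}e^{-1/r}\,dr$, a direct computation gives $\|z_1^jz_2^k\|_{L^2(\mu)}^2=\frac{\pi}{k+1}\cdot 2\pi\,J\!\left(2(j+k)\right)$, with $J(\beta)$ finite for every $\beta\in\rl$.

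Next I would test on $f_m(z)=|z_1|^{m/(p-1)}\,(z_1/|z_1|)^m$, which is independent of $z_2$ and carries a single angular frequency $m$ in the $z_1$-direction. The orthogonality relations then collapse the projection to a single term: one finds $\BpHmu f_m=\frac{J(mp')}{J(2m)}\,z_1^m$, where $p'=\frac{p}{p-1}$, so that in the $z_1$-direction $\BpHmu$ acts as the rank-one projection onto $z_1^m$. Keeping track of the weighted $L^p$ norms reduces the whole problem to a ratio of radial moments,
\[
\frac{\|\BpHmu f_m\|_{L^p(\mu)}^p}{\|f_m\|_{L^p(\mu)}^p}\;=\;\frac{J(mp')^{\,p-1}\,J(mp)}{J(2m)^{\,p}},
\]
and it suffices to show this quantity tends to $\infty$ along $m=-n\to-\infty$; legitimacy of the family (i.e.\ $f_m\in L^2(\mu)\cap L^p(\mu)$) is again guaranteed by the rapid decay of $\exp(-1/|z_1|)$.

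The heart of the argument, and the step I expect to require the most care, is the asymptotics of $J(\beta)$ as $\beta\to-\infty$. Here the substitution $u=1/r$ turns $J(\beta)$ into an upper incomplete Gamma integral $\int_1^\infty u^{-\beta-5}e^{-u}\,du$, so that $J(-N)=\Gamma(N-c)\,(1+o(1))$ for a fixed shift $c$, the lower incomplete part being negligible. Feeding Stirling's formula $\log\Gamma(x)=x\log x-x+O(\log x)$ into the three moments $J(-np')$, $J(-np)$, $J(-2n)$, the dominant $n\log n$ contributions cancel identically and the surviving term is $np\,\log(pp'/4)+O(\log n)$. Since $pp'=\frac{p^2}{p-1}\ge 4$ with equality only at $p=2$, the logarithm of the ratio grows linearly in $n$ whenever $p\neq 2$, so the ratio diverges and $\BpHmu$ cannot be bounded on $L^p(\Ht,\mu)$. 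The remaining loose ends that I would need to verify carefully are the completeness of the monomial system (so that the single-term projection formula is valid) and the uniform control of the incomplete-gamma and Stirling error terms; these are routine once the leading-order cancellation above is in place.
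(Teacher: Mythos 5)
Your argument is correct, and while it shares the paper's analytic core---diagonalizing against monomials thanks to the radial weight, reducing everything to moments of the form $\int_0^1 r^{\alpha}e^{-1/r}\,dr$, and extracting their growth from the incomplete-Gamma identity plus Stirling---it differs from the paper's proof in two genuine ways. First, the paper does not test on $\Ht$ directly: it invokes the Forelli--Rudin inflation principle to pass to the punctured disc with the weight $|z|^2\exp(-1/|z|)$ and runs the computation there; your fiberwise integration in $z_2$ accomplishes the same reduction by hand (the extra $r^{2}$ inside your $J(\beta)$ is exactly that inflation factor). Second, and more substantively, the test families differ: the paper uses $z^{-jk}\overline{z}^{k}$, fixing $j$ large depending on $p$ (so that $p>2+\frac{2}{j}$) and letting $k\to\infty$, which yields divergence only for $p>2$, with $p<2$ then handled by self-adjointness and duality; your family $|z_1|^{m/(p-1)}(z_1/|z_1|)^m$ is adapted to the exponent $p$, and the resulting criterion $pp'=\frac{p^2}{p-1}>4\iff p\neq 2$ disposes of $p>2$ and $p<2$ in one stroke, with the clean leading term $np\log(pp'/4)$ in place of the paper's comparison of $(\frac{p}{2})^{2pk}$ against $(1+\frac{1}{j})^{-2pk}$. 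The two loose ends you flag are indeed routine: completeness of $\{z_1^jz_2^k:\ j\in\Z,\ k\ge 0\}$ follows from the Laurent expansion on the Reinhardt domain $\Ht$ (all negative powers of $z_1$ being admissible precisely because of the exponential decay, which is the structural point you correctly isolate), exactly as the paper uses the analogous fact on $\mathbb{D}^*$; and the $O(\log n)$ Stirling and incomplete-Gamma errors are swamped by the term linear in $n$.
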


Although we state the result for a single choice of $\nu(x)$, it will be clear in the proof that the statement holds for more general choices of the form $\nu(x)=x^s\exp\left(-\frac{1}{x}\right)$ for $s\in\mathbb{R}$. Moreover, further generalizations can be formulated on domains that are variants of the Hartogs triangle, see \cite{Chen13} and \cite{EdholmMcNeal15}.\\

In addition to the change in $L^p$ regularity of the weighted Bergman projection, another related change takes place in the boundary behavior of the weighted Bergman kernel with exponentially decaying weights. For polynomially decaying weights on the Hartogs triangle, the weighted Bergman kernel on the diagonal does not grow faster than a power of the distance function. The following result can be derived from \cite[Lemma 3.1]{Chen14}, where the author presents a detailed study of polynomial weights on the Hartogs triangle.

\begin{theorem*}[\cite{Chen14}] Let $\delta(z)=|z_1|$ and $\nu(x)=x^{t}$ where $t\in(0,\infty)$ with the unique decomposition $t=s+2k$ such that $k\in\mathbb{Z}$ and $s\in(0,2]$. Then there exist real numbers $\tau$ and $C$ such that 
\begin{equation*}
\delta(z)^{\tau}\BkHnd(z,z)\leq C
\end{equation*}
as $z$ approaches the origin inside any cone $V_{\gamma}=\{(z_1,z_2)\in\mathbb{C}^2:~\gamma |z_2|<|z_1|\}$, where $\gamma>1$.
\end{theorem*}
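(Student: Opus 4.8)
The plan is to transport the whole computation to the product domain $\mathbb{D}^{\ast}\times\mathbb{D}$ through the biholomorphism $\Phi:\Ht\to\mathbb{D}^{\ast}\times\mathbb{D}$ given by $\Phi(z_1,z_2)=(z_1,z_2/z_1)$, whose holomorphic Jacobian is $J_\Phi(z)=1/z_1$. Since $\Phi$ pulls the weight $|w_1|^t$ on $\mathbb{D}^{\ast}\times\mathbb{D}$ back to $|z_1|^t=\nu(\delta)$ on $\Ht$, the weighted Bergman kernels are related by the standard transformation law
\begin{equation*}
\BkHnd(z,z)=|J_\Phi(z)|^2\,B^{|w_1|^t}_{\mathbb{D}^{\ast}\times\mathbb{D}}\big(\Phi(z),\Phi(z)\big)=|z_1|^{-2}\,B^{|w_1|^t}_{\mathbb{D}^{\ast}\times\mathbb{D}}\big(\Phi(z),\Phi(z)\big).
\end{equation*}
The gain is that $|w_1|^t$ is a product weight on a product domain, so the kernel splits,
\begin{equation*}
B^{|w_1|^t}_{\mathbb{D}^{\ast}\times\mathbb{D}}\big((w_1,w_2),(w_1,w_2)\big)=B^{|w_1|^t}_{\mathbb{D}^{\ast}}(w_1,w_1)\cdot B_{\mathbb{D}}(w_2,w_2),
\end{equation*}
and I can treat the two one-variable factors separately.

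For the disc factor, $B_{\mathbb{D}}(w_2,w_2)=\pi^{-1}(1-|w_2|^2)^{-2}$. A point of the cone $V_\gamma$ is mapped by $\Phi$ to a point with $|w_2|=|z_2|/|z_1|<1/\gamma<1$, so $w_2$ remains inside the disc of radius $1/\gamma$; hence this factor is bounded by $\pi^{-1}(1-\gamma^{-2})^{-2}$ uniformly as $z\to 0$ in $V_\gamma$. This is exactly the step that uses the cone hypothesis: outside a cone $w_2$ could approach $\partial\mathbb{D}$ and the factor would be unbounded.

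The heart of the matter is the punctured-disc factor $B^{|w_1|^t}_{\mathbb{D}^{\ast}}$. Because the origin has been removed, the admissible holomorphic functions may have Laurent expansions containing negative powers, and the rotational invariance of the weight makes the monomials $\{w_1^n\}_{n\in\Z}$ pairwise orthogonal with $\|w_1^n\|^2=2\pi/(2n+t+2)$; this is finite precisely when $2n+t+2>0$, which, writing $t=s+2k$ with $s\in(0,2]$, amounts to $n\ge -k-1$. Thus the weighted Bergman space of $\mathbb{D}^{\ast}$ is the closed linear span of $\{w_1^n:n\ge -k-1\}$, and on the diagonal
\begin{equation*}
B^{|w_1|^t}_{\mathbb{D}^{\ast}}(w_1,w_1)=\frac{1}{2\pi}\sum_{n\ge -k-1}(2n+t+2)\,|w_1|^{2n}.
\end{equation*}
As $w_1\to 0$ the most negative exponent $n=-k-1$ dominates and produces the leading behavior $\sim|w_1|^{-2k-2}=|z_1|^{-2k-2}$. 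Feeding the three pieces back, $\BkHnd(z,z)\lesssim|z_1|^{-2}\cdot|z_1|^{-2k-2}=|z_1|^{-2k-4}$ throughout $V_\gamma$, so the statement holds with $\tau=2k+4$ and a constant $C=C(\gamma,t)$.

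The step I expect to be the main obstacle is the punctured-disc analysis, where the entire phenomenon originates. One must recognize that deleting the origin genuinely enlarges the Bergman space (the removable-singularity theorem fails here, since the weight $|w_1|^t$ is too weak to suppress a pole at $0$: already $w_1^{-1}$ has finite weighted norm when $t>0$), then correctly identify the lowest admissible Laurent exponent $-k-1$, and verify that the monomial system is complete in the weighted norm so that the kernel is indeed given by the displayed series. The completeness and the exclusion of the forbidden powers follow from a monotone-convergence argument on the annuli $\{\e<|w_1|<1\}$, but this is the delicate point. Once it is settled, the transformation law, the product factorization, and the cone estimate are all routine.
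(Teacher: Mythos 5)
Your argument is correct and follows essentially the same route as the paper: transfer to $\mathbb{D}^{\ast}\times\mathbb{D}$ via $\Phi(z_1,z_2)=(z_1,z_2/z_1)$, apply the transformation law and the product factorization, identify the lowest admissible Laurent exponent $-k-1$ in the weighted Bergman space of the punctured disc, and bound the disc factor using the cone condition, arriving at the same exponent $\tau=2k+4$. The only difference is that you derive the punctured-disc kernel from scratch, whereas the paper cites the closed form from \cite{Chen14}.
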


However, the weighted Bergman kernel with respect to exponential weights grows faster than any power of the distance to the boundary function. We state the following result.

\begin{theorem}\label{kernel}
Let $\nu(x)=\exp\left(-\frac{1}{x}\right)$, $\delta(z)=|z_1|$, and define the exponentially decaying weight
\begin{equation*}
\mu(z)=\nu(\delta(z))=\exp\left(-\frac{1}{|z_1|}\right).
\end{equation*}
Then for any $\tau>0$,
\begin{equation*}
\delta(z)^{\tau}\BkHnd(z,z)
\end{equation*}
is unbounded as $z$ approaches the origin inside any cone $V_{\gamma}=\{(z_1,z_2)\in\mathbb{C}^2:~\gamma |z_2|<|z_1|\}$, where $\gamma>1$.
\end{theorem}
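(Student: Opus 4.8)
The plan is to bound the weighted kernel from below on the diagonal by testing the extremal characterization of the Bergman kernel against a single, carefully chosen monomial. Recall that for any admissible weight the diagonal kernel admits the description
\[
\BkHnd(z,z)=\sup\left\{|f(z)|^2 : f\in L^2_a(\Ht,\mu),\ \|f\|_{L^2(\Ht,\mu)}\le 1\right\},
\]
so it suffices to exhibit, for each $z$ near the origin, a holomorphic $f$ for which $|f(z)|^2/\|f\|_{L^2(\Ht,\mu)}^2$ is large compared to any fixed negative power of $\delta(z)=|z_1|$.

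The decisive feature of the exponential weight is that it renders negative powers of $z_1$ square-integrable. Indeed, since $z_1\neq 0$ on $\Ht$, the monomial $z_1^j$ is holomorphic on $\Ht$ for every $j\in\Z$, and passing to polar coordinates $z_1=r_1 e^{i\theta_1}$, $z_2=r_2 e^{i\theta_2}$ with $0<r_2<r_1<1$ gives
\[
\|z_1^j\|_{L^2(\Ht,\mu)}^2 = 2\pi^2 \int_0^1 r^{2j+3}e^{-1/r}\,dr,
\]
which is finite for every $j\in\Z$ because $e^{-1/r}$ decays faster than any power as $r\to 0^+$. This is precisely the mechanism absent in the polynomial case of the preceding theorem, where negative $j$ are excluded from the monomial basis, and it is what will produce the super-polynomial growth.

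First I would record the consequence $\BkHnd(z,z)\ge \frac{|z_1|^{2j}}{2\pi^2\int_0^1 r^{2j+3}e^{-1/r}\,dr}$, valid for every $j\in\Z$ and, crucially, independent of $z_2$. Next I would estimate the integral for very negative $j$: the substitution $u=1/r$ converts it into an upper incomplete Gamma integral bounded above by $\Gamma(-2j-4)$ once $j<-2$. Writing $N=-2j-4$ and $r=|z_1|$, the lower bound becomes $\BkHnd(z,z)\gtrsim r^{-4}\,(1/r)^{N}/(N-1)!$, and I would then take $N$ to be the even integer nearest $1/r$. The final step is to extract the exponential: comparing with the Taylor expansion of $e^{1/r}$, the dominant term of $\sum_N (1/r)^N/N!$ is comparable to $r^{-1/2}e^{1/r}$, whence $\BkHnd(z,z)\gtrsim r^{-9/2}e^{1/r}$ uniformly in $z_2$. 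Multiplying by $\delta(z)^\tau=r^\tau$ gives $\delta(z)^\tau\BkHnd(z,z)\gtrsim r^{\tau-9/2}e^{1/r}$, which tends to $\infty$ as $r=|z_1|\to 0$ for every $\tau>0$; since the estimate does not involve $z_2$, it holds in particular as $z\to 0$ inside any cone $V_\gamma$.

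The main obstacle is this last step: extracting the genuine exponential factor $e^{1/|z_1|}$ from $\sup_j |z_1|^{2j}\big/\int_0^1 r^{2j+3}e^{-1/r}\,dr$ by optimizing over the discrete parameter $j$. This requires a quantitative lower bound on the maximal term of the exponential series (a Stirling- or Laplace-type estimate), together with the observation that the optimal integer $j$ sits near $-1/(2|z_1|)$ and that moving $j$ by $O(1)$ changes the corresponding term by at most a bounded factor. The precise power of $|z_1|$ that accompanies $e^{1/|z_1|}$ is immaterial, since any fixed power is absorbed by the exponential, which is all that the statement requires.
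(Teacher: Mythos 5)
Your argument is correct, but it takes a genuinely different route from the paper. The paper pulls the kernel back under the biholomorphism $\Phi(z_1,z_2)=(z_1,z_2/z_1)$ to $\mathbb{D}^*\times\mathbb{D}$, expands the weighted kernel on the punctured disc as $\sum_k (z\overline{\zeta})^k/I(2k+1)$, and sums the series using the asymptotics $I(-\alpha)\sim\Gamma(\alpha-1)$ to obtain the two-sided estimate $B_{\lambda}(z,z)\approx|z|^{-3}\sinh|z|^{-1}+(1-|z|^2)^{-2}$, hence $B^{\mu}_{\Ht}(z,z)\approx|z_1|^{-5}e^{1/|z_1|}\left(1-|z_2|^2/|z_1|^2\right)^{-2}$ near the origin. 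You instead use the extremal characterization of the diagonal kernel and test against a single monomial $z_1^{j}$ with $j$ near $-1/(2|z_1|)$, reducing everything to the upper bound $\int_0^1 r^{2j+3}e^{-1/r}\,dr\le\Gamma(-2j-4)$ and a Stirling-type estimate for the maximal term of the exponential series; the parity issue ($N=-2j-4$ even) and the $O(1)$ perturbation of the optimal index are correctly handled, since adjacent terms of $x^N/N!$ near $N\approx x$ differ by a bounded factor. Your approach is more elementary --- it avoids the transformation rule for weighted kernels and the full Fourier expansion --- and your lower bound is uniform in $z_2$, so the cone hypothesis is not even needed for unboundedness. What it does not give is the matching upper bound, which the paper's explicit computation provides and then uses in the remark following the theorem (boundedness of $B^{\mu}_{\Ht}(z,z)\mu(z)\delta(z)^{\tau}$ in the cone). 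One cosmetic slip: the maximal term of $\sum_N x^N/N!$ is $\approx e^{x}/\sqrt{2\pi x}$, i.e.\ $\approx r^{1/2}e^{1/r}$ with $x=1/r$ rather than $r^{-1/2}e^{1/r}$ as written; your final exponent $r^{-9/2}$ is nonetheless consistent because of the extra factor $N\approx 1/r$ coming from $(N-1)!$ versus $N!$, and as you note the precise power is immaterial.
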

Furthermore, if we insert a factor of $\mu(z)$ into the product then we get boundedness. See the remark after the proof of Theorem \ref{kernel}.

Finally, we demonstrate another change from polynomial decay to exponential decay in the setting of generalized Hartogs triangles. Recently, in a series of papers \cite{EdholmMcNeal15,Edholm15}, $L^p$ boundedness of the Bergman projection operator is studied on the following variants of the Hartogs triangle. Let $k$ be a positive number and define
\begin{equation*}
\mathbb{H}_{k}=\{(z_1,z_2)\in\cx^2~:~|z_2|<|z_1|^{k}<1\}\footnote{In \cite{EdholmMcNeal15,Edholm15}, authors denote this domain by $\mathbb{H}_{\frac{1}{k}}$.}.
\end{equation*}
It is shown that when $k$ is a positive integer, the (unweighted) Bergman projection is bounded on $L^p(\mathbb{H}_k)$ for some values of $p$.

\begin{theorem*}[\cite{EdholmMcNeal15, EdhMcN}] For positive integer $k$, the Bergman projection $\mathbf{B}_{\mathbb{H}_k}$ is bounded on $L^p(\mathbb{H}_k)$ if and only if $p\in\left(\frac{2k+2}{k+2},\frac{2k+2}{k}\right)$.
\end{theorem*}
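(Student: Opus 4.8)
The plan is to reduce everything to an explicit formula for the Bergman kernel $K_{\mathbb{H}_k}$ and then run Schur-test estimates. The starting observation is that for a positive integer $k$ the map
\[
\Phi(z_1,z_2)=\left(z_1,\ \frac{z_2}{z_1^{k}}\right)
\]
is a biholomorphism from $\mathbb{H}_k$ onto $\mathbb{D}^{\ast}\times\mathbb{D}$, with complex Jacobian determinant $\det\Phi'(z)=z_1^{-k}$; this is precisely where integrality of $k$ is used, since otherwise $z_1^{k}$ is multivalued. Because $L^2$ holomorphic functions extend across the puncture (Riemann removable singularity), the Bergman space and kernel of $\mathbb{D}^{\ast}\times\mathbb{D}$ agree with those of $\mathbb{D}\times\mathbb{D}$, so $K_{\mathbb{D}^{\ast}\times\mathbb{D}}(u,v)=\pi^{-2}(1-u_1\bar v_1)^{-2}(1-u_2\bar v_2)^{-2}$. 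Transporting this through $\Phi$ via the biholomorphic transformation rule for Bergman kernels yields the closed form
\[
K_{\mathbb{H}_k}(z,w)=\frac{1}{\pi^{2}}\,\frac{z_1^{k}\bar w_1^{k}}{(1-z_1\bar w_1)^{2}\,(z_1^{k}\bar w_1^{k}-z_2\bar w_2)^{2}},
\]
which for $k=1$ recovers the familiar Hartogs triangle kernel (and gives the known range $(4/3,4)$). Equivalently, one can derive the same formula by orthonormalizing the monomial basis $z_1^{a}z_2^{b}$ with $b\ge 0$ and $a\ge -k(b+1)$, whose norms are elementary Beta-type integrals.

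For the sufficiency direction, I would show that the integral operator with kernel $|K_{\mathbb{H}_k}(z,w)|$ is $L^p$-bounded by means of Schur's test. The natural auxiliary weights are powers of $|z_1|$, together with the quantities $1-|z_1|^2$ and $|z_1|^{2k}-|z_2|^2$ that measure distance to the two boundary strata. The heart of the matter is a Forelli--Rudin / Bekoll\'e--Bonami type estimate controlling
\[
\int_{\mathbb{H}_k}\bigl|K_{\mathbb{H}_k}(z,w)\bigr|\,|w_1|^{\beta}\,dV(w)
\]
by a comparable power $|z_1|^{\beta}$ over a range of exponents $\beta$; after pushing forward through $\Phi$ this reduces to one-variable Forelli--Rudin integrals on the disc. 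Choosing $\beta$ optimally produces boundedness on an open interval, and tracking the weight exponent $k$ is what pins the endpoints to $\tfrac{2k+2}{k+2}$ and $\tfrac{2k+2}{k}$.

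For necessity I would establish sharpness by exhibiting explicit witnesses to unboundedness at each endpoint. A natural family is concentrated at the singular point of $\mathbb{H}_k$ at the origin: one tests against functions of the form $|z_1|^{-\alpha}$ times a cutoff (equivalently, tracks the growth of $\|z_1^{a}z_2^{b}\|$ along the monomial basis), computes the $L^p$ and $L^{p'}$ behavior of these functions and of their images under $K_{\mathbb{H}_k}$, and shows the ratio blows up as $p$ leaves the stated interval. Since $\tfrac{2k+2}{k+2}$ and $\tfrac{2k+2}{k}$ are conjugate exponents (their reciprocals sum to $1$), it suffices to treat a single endpoint and invoke the $L^{p}$--$L^{p'}$ duality of the self-adjoint projection $\mathbf{B}_{\mathbb{H}_k}$.

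The main obstacle I anticipate is the sharp endpoint analysis rather than qualitative boundedness. The kernel carries two competing singular factors, and near the origin the weight $|z_1|^{k}$ couples the $z_1$- and $z_2$-directions anisotropically, so extracting the exact critical exponents requires two-sided kernel estimates and a Schur weight that is simultaneously optimal near both boundary pieces. Making the necessity range match the sufficiency range exactly---so that the interval is neither too large nor too small---is the delicate point, and it is where the specific value of $k$ enters decisively.
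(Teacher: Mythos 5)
This statement appears in the paper only as a quoted result with citations to Edholm--McNeal; the paper gives no proof of it, so there is nothing internal to compare against. Judged on its own, your outline reconstructs the strategy actually used in the cited references: the biholomorphism $\Phi(z_1,z_2)=(z_1,z_2/z_1^{k})$ onto $\mathbb{D}^{\ast}\times\mathbb{D}$, the removability of the puncture, and the resulting closed form
\[
K_{\mathbb{H}_k}(z,w)=\frac{1}{\pi^{2}}\,\frac{(z_1\bar w_1)^{k}}{(1-z_1\bar w_1)^{2}\bigl((z_1\bar w_1)^{k}-z_2\bar w_2\bigr)^{2}}
\]
are all correct (as is the description of the monomial basis $z_1^az_2^b$, $b\ge 0$, $a\ge -k(b+1)$, and the observation that the two endpoints are conjugate exponents, so self-adjointness of $\mathbf{B}_{\mathbb{H}_k}$ halves the necessity argument). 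Working with $|K_{\mathbb{H}_k}|$ directly on $\mathbb{H}_k$ also sidesteps the usual pitfall that conjugating by $\Phi$ is an $L^p$ isometry only after inserting a Jacobian factor that depends on $p$.

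The gap is that everything which actually produces the interval $\bigl(\tfrac{2k+2}{k+2},\tfrac{2k+2}{k}\bigr)$ is deferred. You never specify the Schur weight, and the Forelli--Rudin estimate $\int_{\mathbb{H}_k}|K_{\mathbb{H}_k}(z,w)|\,|w_1|^{\beta}\,dV(w)\lesssim |z_1|^{\beta}$ is only valid for $\beta$ in a specific window (determined by integrating out $w_2$ over $|w_2|<|w_1|^{k}$, which costs a factor $|z_1|^{-2k}|w_1|^{2k}$ up to a logarithm, and then running a one-variable estimate in $w_1$); the theorem's endpoints come precisely from the nonemptiness of the set of admissible $\beta$ for the pair $(p,p')$, and none of this is computed. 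Likewise, no explicit witness for unboundedness at $p=\tfrac{2k+2}{k}$ is produced; the natural one is visible in your own setup, since the extremal basis elements $z_1^{-k(b+1)}z_2^{b}$ lie in $L^p(\mathbb{H}_k)$ exactly when $p<\tfrac{2k+2}{k}$, so one must exhibit a bounded (or $L^p$) input whose projection is comparable to such a monomial and verify the norm ratio diverges. As written, the specific endpoints are asserted rather than derived, and you flag this yourself as the delicate point; the plan is the right one, but the proof is not yet there.
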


On the other hand, the domain $\mathbb{H}_k$ will become pieces of boundaries as $k$ tends to $\infty$. In order to study the degenerate $L^p$ boundedness, we define the following exponential version as the limiting domain in $\cx^2$
\begin{equation*}
\mathbb{H}_{\infty}=\left\{(z_1,z_2)\in\cx^2:~|z_2|<\exp\left(-\frac{1}{|z_1|}\right),~0<|z_1|<1\right\}.
\end{equation*}
Then we prove the following result.

\begin{theorem}
\label{infinity}
The Bergman projection $\mathbf{B}_{\mathbb{H}_{\infty}}$ is bounded on $L^p(\mathbb{H}_{\infty})$ if and only if $p=2$.
\end{theorem}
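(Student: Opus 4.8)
The plan is to exploit the complete Reinhardt symmetry of $\mathbb{H}_{\infty}$ and reduce everything to the behavior of monomials. First I would record that the monomials $z_1^kz_2^j$ with $k\in\mathbb{Z}$ and $j\geq 0$ form a complete orthogonal system in $L^2_a(\mathbb{H}_{\infty})$: each is square-integrable because the fiber radius $e^{-1/|z_1|}$ decays faster than any power of $|z_1|$ near the puncture, so even the negative powers of $z_1$ survive integration. Since $\mathbf{B}_{\mathbb{H}_{\infty}}$ is an orthogonal projection, the case $p=2$ is immediate, and the entire content lies in proving unboundedness for every $p\neq 2$ (by self-adjointness and interpolation it suffices to rule out each $p>2$).

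The key reduction uses the torus action $(z_1,z_2)\mapsto(e^{i\alpha}z_1,e^{i\beta}z_2)$, under which $\mathbb{H}_{\infty}$ and Lebesgue measure are invariant. I would decompose $L^p$ into bi-homogeneous pieces $\phi(|z_1|,|z_2|)e^{ik\theta_1}e^{ij\theta_2}$. Extraction of a single bi-frequency is an $L^p$-contraction (Jensen), and $\mathbf{B}_{\mathbb{H}_{\infty}}$ commutes with the torus, so it preserves each bi-frequency and acts there as the rank-one orthogonal projection onto $\mathrm{span}(z_1^kz_2^j)$ (and as $0$ when $j<0$). A short Hölder/duality computation then yields the exact norm of $\mathbf{B}_{\mathbb{H}_{\infty}}$ restricted to the $(k,j)$-mode,
\[
N_{k,j}=\frac{\|z_1^kz_2^j\|_{L^p}\,\|z_1^kz_2^j\|_{L^{p'}}}{\|z_1^kz_2^j\|_{L^2}^2}\leq\|\mathbf{B}_{\mathbb{H}_{\infty}}\|_{L^p\to L^p},\qquad p'=\tfrac{p}{p-1}.
\]
By Lyapunov's inequality $N_{k,j}\geq 1$ always, and it suffices to exhibit modes along which $N_{k,j}\to\infty$.

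For the computation, integrating out $z_2$ and the angular variables gives $\|z_1^kz_2^j\|_{L^q}^q$ equal to a constant times $\int_0^1 r^{qk+1}e^{-(qj+2)/r}\,dr$. Taking $j=0$ and substituting $u=1/r$ converts this into an incomplete Gamma integral whose Laplace asymptotics as $k\to-\infty$ behave like $\Gamma\!\left(q|k|\right)2^{-q|k|}$ up to lower-order factors. Substituting into $N_{k,0}$, the powers of $2$ cancel exactly because $\tfrac1p+\tfrac1{p'}=1$, and Stirling leaves
\[
N_{k,0}\sim\left(\frac{pp'}{4}\right)^{|k|}.
\]
Since $pp'=\frac{p^2}{p-1}$, one has $pp'/4\geq 1$ with equality precisely when $(p-2)^2=0$; hence $pp'/4>1$ for every $p\neq 2$, forcing $N_{k,0}\to\infty$. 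This contradicts boundedness and, together with the free $p=2$ estimate, gives the stated dichotomy.

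The main obstacle is the asymptotic analysis of $\int_0^1 r^{qk+1}e^{-c/r}\,dr$ as $k\to-\infty$: I must verify that the super-exponential growth $\Gamma(q|k|)2^{-q|k|}$ is genuinely attained and that the subleading Stirling and polynomial corrections cannot disturb the clean exponent $pp'/4$ (they cannot, since that constant lives at the exponential scale $(pp'/4)^{|k|}$ while all corrections are sub-exponential). Secondary technical points are justifying the Laurent expansion and completeness of the monomial system over the punctured base, and confirming that single-bi-frequency extraction is an $L^p$-contraction so that $N_{k,j}$ is a legitimate lower bound for the operator norm. The arithmetic identity $(p-2)^2\geq 0\Leftrightarrow pp'\geq 4$ is exactly what collapses the boundedness set to the single point $p=2$, and the fact that the blow-up is driven by the modes $k\to-\infty$ reflects, as expected, the growth of the kernel at the singular point seen in Theorem \ref{kernel}.
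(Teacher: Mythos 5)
Your proposal is correct, but it takes a genuinely different route from the paper. The paper's proof is a two-line reduction: by the Forelli--Rudin inflation principle (\cite[Proposition 4.4]{ZeytuncuTran}), boundedness of $\mathbf{B}_{\mathbb{H}_{\infty}}$ on $L^p(\mathbb{H}_{\infty})$ forces boundedness of the weighted projection $\mathbf{B}_{\lambda}$ on $L^p(\mathbb{D}^*,\lambda)$ with $\lambda(z)=\exp(-1/|z|)$, and the latter was already ruled out for $p>2$ in Section 2 by testing on the non-holomorphic functions $z^{-jk}\overline{z}^k$ and running a fairly delicate asymptotic analysis of the moments $I(\alpha)$. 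You instead work directly on the two-dimensional domain, using the Reinhardt structure to reduce to the rank-one projections onto monomials $z_1^kz_2^j$ and the standard lower bound $N_{k,j}=\|z_1^kz_2^j\|_{p}\|z_1^kz_2^j\|_{p'}/\|z_1^kz_2^j\|_2^2\le\|\mathbf{B}_{\mathbb{H}_{\infty}}\|_{L^p\to L^p}$ (the technique of \cite{ChakZey} and \cite{EdholmMcNeal15}); your asymptotics check out, since $\|z_1^{-m}\|_{L^q}^q\approx\Gamma(qm-2)2^{-qm}$ and Stirling gives $N_{-m,0}\approx(pp'/4)^m$ with $pp'/4>1$ exactly when $p\neq2$. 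What your approach buys: it is self-contained (no inflation principle, no dependence on the Section~2 computation), it treats $p<2$ and $p>2$ simultaneously since $N_{k,0}$ is symmetric in $p\leftrightarrow p'$, and it produces the clean sharp rate $(pp'/4)^{|k|}$ tied to the arithmetic identity $(p-2)^2\ge0$. What the paper's approach buys: by passing through the punctured disc it recycles one computation for both Theorem \ref{main} and Theorem \ref{infinity}, and the inflation principle packages the reduction uniformly for all the Hartogs-type domains considered. The only points you should nail down in a full write-up are the ones you already flag: that holomorphic functions on $\mathbb{H}_{\infty}$ have Laurent expansions with $j\ge0$ (so the $(k,j)$-component of the Bergman space is exactly $\mathbb{C}\cdot z_1^kz_2^j$, or $\{0\}$ for $j<0$), and that the H\"older extremal $f=|z_1^kz_2^j|^{p'-2}z_1^kz_2^j$ lies in $L^p$ and realizes $N_{k,j}$; both are routine here because the fiber radius $e^{-1/|z_1|}$ makes every $z_1^k$, $k\in\mathbb{Z}$, integrable to all powers.
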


Recently, Edholm and McNeal \cite{EdhMcN} obtained similar degenerate $L^p$ regularity on domains $\mathbb{H}_{\gamma}$ where $\gamma$ is an irrational number. These examples are particularly interesting since the degeneracy is not due to the exponential decay but to non-rationality of the singularity.\\

In the following sections, we present proofs for the three statements above. We start with Theorem \ref{main}, where the proof is based on some asymptotic computations. Then we continue with Theorem \ref{kernel} and we present a proof by obtaining an (almost) explicit closed form for the weighted Bergman kernel on the diagonal. Finally, the proof of Theorem \ref{infinity} is based on the weighted theory on the punctured disc.

We highlight again that the \textit{irregular} behavior in Theorems \ref{main} and \ref{kernel} are due to the exponential decay of the weight and similar results may hold on other domains. We plan to study these analogous questions on more general domains in future.

Throughout the paper, we write $x\approx y$ to mean that there exists $C>0$ such that $\frac{1}{C}x\leq y\leq Cx$.\\


\section{Proof of Theorem \ref{main}}

We prove Theorem \ref{main} by studying weighted Bergman projections on the punctured unit disc $\mathbb{D}^*$. Let $\Bl$ be the weighted Bergman projection from $L^2(\mathbb{D}^*, \lambda)$ onto $L_a^2(\mathbb{D}^*, \lambda)$, where $\lambda(z)=\exp\left(-\frac{1}{|z|}\right)$, $z\in\mathbb{D}^*$. Given any sufficiently large $j\in\Z^+$, we pick $p>2+\frac{2}{j}$. We study the behavior of the sequence $\left\{z^{-jk}\overline{z}^k\right\}_{k\in\Z^+}$ under the projection $\Bl$. 

First, for $\alpha\in\rl$, we define
\begin{equation*}
I(\alpha)=\int_0^1r^{\alpha}e^{-\frac{1}{r}}\,dr.
\end{equation*}
Since $\lambda$ is radial on $\mathbb{D}^*$ and exponential decaying at the origin, all $\{z^n\}_{n\in\Z}$ are orthogonal and in $L_a^2(\mathbb{D}^*, \lambda)$. So the weighted Bergman kernel has the form
\begin{equation*}
B_{\lambda}(z,\zeta)=\sum_{n=-\infty}^{\infty}c_n(z\overline{\zeta})^n,
\end{equation*}
where $c_n=\Big(\int_{\mathbb{D}^*}|z|^{2n}\exp\big(-\frac{1}{|z|}\big)\,dA(z)\Big)^{-1}$. Then by using the orthogonality of monomials and the labelling above, we get
\begin{equation}
\label{test}
\begin{split}
\frac{\|\Bl(z^{-jk}\overline{z}^k)\|^p_{\lambda}}{\|z^{-jk}\overline{z}^k\|^p_{\lambda}}
&=\frac{\int_{\mathbb{D}^*}\left|\int_{\mathbb{D}^*}B_{\lambda}(z,\zeta)\cdot\zeta^{-jk}\overline{\zeta}^k\cdot\exp\Big(-\frac{1}{|\zeta|}\Big)\,dA(\zeta)\right|^p\exp\Big(-\frac{1}{|z|}\Big)\,dA(z)}{\int_{\mathbb{D}^*}|z^{-jk}\overline{z}^{k}|^p\exp\Big(-\frac{1}{|z|}\Big)\,dA(z)}\\
&=\frac{\int_{\mathbb{D}^*}\left|\int_{\mathbb{D}^*}\sum_{n=-\infty}^{\infty}c_n(z\overline{\zeta})^n\cdot\zeta^{-jk}\overline{\zeta}^k\cdot\exp\Big(-\frac{1}{|\zeta|}\Big)\,dA(\zeta)\right|^p\exp\Big(-\frac{1}{|z|}\Big)\,dA(z)}{\int_{\mathbb{D}^*}|z|^{-(j-1)pk}\exp\Big(-\frac{1}{|z|}\Big)\,dA(z)}\\
&=\frac{\left|\int_{\mathbb{D}^*}|\overline{\zeta}|^{-2jk}\cdot\exp\Big(-\frac{1}{|\zeta|}\Big)\,dA(\zeta)\right|^p\int_{\mathbb{D}^*}|z|^{-(j+1)pk}\exp\Big(-\frac{1}{|z|}\Big)\,dA(z)}{\left|\int_{\mathbb{D}^*}|z|^{-2(j+1)k}\exp\Big(-\frac{1}{|z|}\Big)\,dA(z)\right|^p\int_{\mathbb{D}^*}|z|^{-(j-1)pk}\exp\Big(-\frac{1}{|z|}\Big)\,dA(z)}\\
&=\frac{\left[I(-2jk+1)\right]^p\cdot I(-(j+1)pk+1)}{[I(-2(j+1)k+1)]^p\cdot I(-(j-1)pk+1)}.
\end{split}
\end{equation}
Our goal is to show that the fraction in the last line blows up. We accomplish this by studying the asymptotic behavior of the integral $I(\alpha)$.

\begin{lemma}
We have the following estimates on $I(\alpha)$ as $\alpha\to\pm\infty$.
\begin{equation}
\label{pos}
\lim_{\alpha\to+\infty}(\alpha+1) I(\alpha)=e^{-1}
\end{equation}

\begin{equation}
\label{neg}
\lim_{\alpha\to+\infty}\frac{I(-\alpha)}{\Gamma(\alpha-1)}=1
\end{equation}
\end{lemma}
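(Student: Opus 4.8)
The plan is to treat the two limits separately, in each case reducing $I(\alpha)$ to a form in which the asymptotics are transparent. Throughout I would use only that $r\mapsto e^{-1/r}$ extends continuously to $[0,1]$ with boundary value $e^{-1}$ at $r=1$ and $0$ at $r=0$, together with elementary comparison estimates; no machinery beyond Laplace's method and a change of variables is needed.

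For \eqref{pos}, the guiding observation is that $(\alpha+1)r^{\alpha}\,dr$ is a probability measure on $[0,1]$, since $(\alpha+1)\int_0^1 r^{\alpha}\,dr=1$, and that this measure concentrates at the endpoint $r=1$ as $\alpha\to+\infty$. Because $e^{-1/r}$ is continuous on $[0,1]$ with $e^{-1/r}\to e^{-1}$ as $r\to 1$, I expect $(\alpha+1)I(\alpha)\to e^{-1}$. To make this rigorous I would fix $\e>0$, choose $\delta>0$ so that $|e^{-1/r}-e^{-1}|<\e$ on $[1-\delta,1]$, and split the integral at $r=1-\delta$. On $[1-\delta,1]$ the contribution lies within $\e$ of $e^{-1}$ after multiplying by $(\alpha+1)$, while on $[0,1-\delta]$ the bounded integrand contributes at most $(\alpha+1)(1-\delta)^{\alpha}e^{-1}\to 0$. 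Letting $\alpha\to+\infty$ and then $\e\to 0$ yields the claim. This is the standard elementary Laplace/Watson-type argument.

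For \eqref{neg}, the key move is the substitution $t=1/r$, which converts $I(-\alpha)$ into an upper incomplete Gamma integral:
\begin{equation*}
I(-\alpha)=\int_0^1 r^{-\alpha}e^{-1/r}\,dr=\int_1^{\infty}t^{\alpha-2}e^{-t}\,dt.
\end{equation*}
Comparing with $\Gamma(\alpha-1)=\int_0^{\infty}t^{\alpha-2}e^{-t}\,dt$, the discrepancy is exactly the lower piece $\int_0^1 t^{\alpha-2}e^{-t}\,dt$, which for $\alpha>1$ is bounded crudely by $\int_0^1 t^{\alpha-2}\,dt=\tfrac{1}{\alpha-1}$. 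Hence
\begin{equation*}
\frac{I(-\alpha)}{\Gamma(\alpha-1)}=1-\frac{\int_0^1 t^{\alpha-2}e^{-t}\,dt}{\Gamma(\alpha-1)},
\end{equation*}
and since $\Gamma(\alpha-1)$ grows faster than any polynomial, the subtracted ratio is at most $\tfrac{1}{(\alpha-1)\Gamma(\alpha-1)}\to 0$. This gives the limit $1$.

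Neither estimate is deep, so I do not anticipate a genuine obstacle. The only point requiring a little care is the concentration argument in \eqref{pos}, where one must confirm that the mass carried by $(\alpha+1)r^{\alpha}\,dr$ away from the endpoint $r=1$ is asymptotically negligible; the factor $(\alpha+1)(1-\delta)^{\alpha}$ handles this cleanly. For \eqref{neg} the essential insight is simply to recognize the incomplete Gamma structure produced by $t=1/r$, after which the conclusion is immediate from the crude bound above.
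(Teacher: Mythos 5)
Your proposal is correct, and the two halves compare differently with the paper. For \eqref{neg} you follow essentially the same route as the authors: the substitution $t=1/r$, the identification of $I(-\alpha)$ as the upper incomplete Gamma integral $\Gamma(\alpha-1)-\int_0^1 t^{\alpha-2}e^{-t}\,dt$, and the observation that the subtracted piece is negligible against $\Gamma(\alpha-1)$; your explicit bound $\int_0^1 t^{\alpha-2}e^{-t}\,dt\le \frac{1}{\alpha-1}$ is a slightly more quantitative version of their appeal to monotone convergence, but the argument is the same. For \eqref{pos} you take a genuinely different route. The paper integrates by parts to obtain the exact recursion $(\alpha+1)I(\alpha)=e^{-1}-I(\alpha-1)$ and then uses $I(\alpha)\to 0$ (monotone convergence) to read off the limit; you instead run the standard Laplace/Watson endpoint-concentration argument, viewing $(\alpha+1)r^{\alpha}\,dr$ as a probability measure concentrating at $r=1$ and splitting at $1-\delta$. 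Both are short and correct. The recursion is slicker and produces the constant $e^{-1}$ with no $\varepsilon$-bookkeeping, whereas your concentration argument is more robust: it shows that for any weight $w$ continuous at $r=1$ one has $(\alpha+1)\int_0^1 r^{\alpha}w(r)\,dr\to w(1)$, which makes transparent why the answer is the boundary value $e^{-1}$ and would survive perturbations of the weight for which no clean integration-by-parts identity exists. (One cosmetic point: your tail bound $(\alpha+1)(1-\delta)^{\alpha}e^{-1}$ is fine since $(1-\delta)^{\alpha}$ decays geometrically, but note that bounding $\int_0^{1-\delta}r^{\alpha}\,dr$ exactly gives $(1-\delta)^{\alpha+1}/(\alpha+1)$, so the factor $(\alpha+1)$ cancels outright and no growth-versus-decay comparison is even needed.)
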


\begin{proof}
To show \eqref{pos}, we first apply the Monotone Convergent Theorem and conclude
\begin{equation*}
\lim_{\alpha\to+\infty}I(\alpha)=0.
\end{equation*}
By integration by parts, we see that
\begin{align*}
I(\alpha)&=\int_0^1r^{\alpha}e^{-\frac{1}{r}}\,dr\\
&=\frac{1}{\alpha+1}r^{\alpha+1}e^{-\frac{1}{r}}\Bigg|_{0}^{1}-\int_{0}^{1}\frac{1}{\alpha+1}r^{\alpha-1}e^{-\frac{1}{r}}\,dr\\
&=\frac{e^{-1}}{\alpha+1}-\frac{I(\alpha-1)}{\alpha+1}.
\end{align*}
By clearing the denominators, and letting $\alpha\to+\infty$, we arrive at \eqref{pos}.

To show \eqref{neg}, we make change of variables in the definition of $I(\alpha)$ by $x=\frac{1}{r}$. We see that
\begin{align*}
I(-\alpha)
&=\int_1^{\infty}x^{\alpha-2}e^{-x}\,dx\\
&=\Gamma(\alpha-1)-\int_0^1x^{\alpha-2}e^{-x}\,dx,
\end{align*}
where $\Gamma$ is the Gamma function. Again by the Monotone Convergent Theorem
\begin{equation*}
\lim_{\alpha\to+\infty}\int_0^1x^{\alpha-2}e^{-x}\,dx=0,
\end{equation*}
and the fact that
\begin{equation*}
\lim_{\alpha\to+\infty}\Gamma(\alpha-1)=+\infty,
\end{equation*}
we obtain \eqref{neg}.
\end{proof}
\medskip

Now we combinte the asymptotic estimate in \eqref{neg} and the Stirling's formula
\begin{equation*}
\Gamma(x+1)\approx \sqrt{2\pi x}\left(\frac{x}{e}\right)^x
\end{equation*}
as $x\to\infty$, to get
\begin{align*}
\lim_{k\to+\infty}\frac{\|\Bl(z^{-jk}\overline{z}^k)\|^p_{\lambda}}{\|z^{-jk}\overline{z}^k\|^p_{\lambda}}
&=\lim_{k\to+\infty}\frac{[\Gamma(2jk-2)]^p\cdot\Gamma((j+1)pk-2)}{[\Gamma(2(j+1)k-2)]^p\cdot\Gamma((j-1)pk-2)}\\
&=\lim_{k\to+\infty}\sqrt{\left(\frac{j}{j+1}\right)^p\left(\frac{j+1}{j-1}\right)}\cdot\frac{(2jk-3)^{(2jk-3)p}[(j+1)pk-3]^{[(j+1)pk-3]}}{[2(j+1)k-3]^{[2(j+1)k-3]p}[(j-1)pk-3]^{[(j-1)pk-3]}}\\
&\approx\lim_{k\to+\infty}\left(\frac{2jk-3}{2(j+1)k-3}\right)^{(2jk-3)p}\cdot\left(\frac{(j+1)pk-3}{(j-1)pk-3}\right)^{(j-1)pk-3}\cdot\left(\frac{(j+1)pk-3}{2(j+1)k-3}\right)^{2pk}.
\end{align*}
\medskip

A straightforward limit computation indicates 
\begin{align*}
\lim_{k\to+\infty}\left(\frac{2jk-3}{2(j+1)k-3}\right)^{(2jk-3)p}\cdot\left(\frac{j+1}{j}\right)^{(2jk-3)p}
&=\lim_{k\to+\infty}\left(1-\frac{3}{2j(j+1)k-3j}\right)^{(2jk-3)p}\\
&=\exp\left(-\frac{3p}{j+1}\right).
\end{align*}
Similarly, we also get
\begin{align*}
\lim_{k\to+\infty}\left(\frac{(j+1)pk-3}{(j-1)pk-3}\right)^{(j-1)pk-3}\cdot\left(\frac{j-1}{j+1}\right)^{(j-1)pk-3}
&=\lim_{k\to+\infty}\left(1+\frac{6}{(j+1)(j-1)pk-3(j+1)}\right)^{(j-1)pk-3}\\
&=\exp\left(\frac{6}{j+1}\right)
\end{align*}
and
\begin{align*}
\lim_{k\to+\infty}\left(\frac{(j+1)pk-3}{2(j+1)k-3}\right)^{2pk}\cdot\left(\frac{2}{p}\right)^{2pk}
&=\lim_{k\to+\infty}\left(1+\frac{3p-6}{2(j+1)pk-3p}\right)^{2pk}\\
&=\exp\left(\frac{3p-6}{j+1}\right).
\end{align*}
Therefore, since $\frac{j+1}{j-1}\ge\left(\frac{j+1}{j}\right)^2$, we obtain
\begin{align*}
\lim_{k\to+\infty}\frac{\|\Bl(z^{-jk}\overline{z}^k)\|^p_{\lambda}}{\|z^{-jk}\overline{z}^k\|^p_{\lambda}}
&\approx\lim_{k\to+\infty}\left(\frac{j}{j+1}\right)^{(2jk-3)p}\left(\frac{j+1}{j-1}\right)^{(j-1)pk-3}\left(\frac{p}{2}\right)^{2pk}\\
&\ge\lim_{k\to+\infty}\left(\frac{j+1}{j}\right)^{2(j-1)pk-6-(2jk-3)p}\left(\frac{p}{2}\right)^{2pk}\\
&=\lim_{k\to+\infty}\left(\frac{j+1}{j}\right)^{3p-6-2pk}\left(\frac{p}{2}\right)^{2pk}\\
&\ge\lim_{k\to+\infty}\left(1+\frac{1}{j}\right)^{-2pk}\left(\frac{p}{2}\right)^{2pk}\\
&=\infty.
\end{align*}
This shows that for any $p>2$, the weighted Bergman projection $\Bl$ is unbounded on $L^p(\mathbb{D}^*, \lambda)$.
\medskip

Now, we deduce the unboundedness of $\BpHmu$ from unboundedness of the weighted Bergman projections as follows. 
Let $\Blt$ be the weighted Bergman projection on $L_a^2(\mathbb{D}^*, \tilde{\lambda})$, where $\tilde{\lambda}(z)=|z|^2\exp\left(-\frac{1}{|z|}\right)$, $z\in\mathbb{D}^*$. Then by the inflation principle (see \cite{ZeytuncuTran, Chen14}), if $\BpHmu$ is bounded on $L^p(\Ht,\mu)$ then $\Blt$ is bounded on $L^p(\mathbb{D}^*, \tilde{\lambda})$.

Note that the behavior of the sequence $\left\{z^{-jk}\overline{z}^k\right\}_{k\in\Z^+}$ under the projection $\Blt$ can be obtained from the behavior of the same sequence under the projection $\Bl$ by shifting the indices of the integral $I(\alpha)$ by $2$. More precisely, we have

\begin{equation*}
\frac{\|\Blt(z^{-jk}\overline{z}^k)\|^p_{\tilde{\lambda}}}{\|z^{-jk}\overline{z}^k\|^p_{\tilde{\lambda}}}=\frac{[I(-2jk+3)]^p\cdot I(-(j+1)pk+3)}{[I(-2(j+1)k+3)]^p\cdot I(-(j-1)pk+3)}
\end{equation*}
and
\begin{equation*}
\lim_{k\to+\infty}\frac{\|\Blt(z^{-jk}\overline{z}^k)\|^p_{\tilde{\lambda}}}{\|z^{-jk}\overline{z}^k\|^p_{\tilde{\lambda}}}\cdot\left(\frac{j}{j+1}\right)^{2p}\left(\frac{j+1}{j-1}\right)^2=\lim_{k\to+\infty}\frac{\|\Bl(z^{-jk}\overline{z}^k)\|^p_{\lambda}}{\|z^{-jk}\overline{z}^k\|^p_{\lambda}}.
\end{equation*}
\medskip 

Hence, for any $p>2$ the weighted Bergman projection $\Blt$ is unbounded on $L^p(\mathbb{D}^*,\tilde{\lambda})$ and we conclude that $\BpHmu$ is bounded on $L^p(\Ht,\mu)$ if and only if $p=2$.
\medskip


\section{Proof of Theorem \ref{kernel}}

In this proof, we again study the weighted Bergman space $L_a^2(\mathbb{D}^*,\lambda)$, where $\lambda(z)=\exp\left(-\frac{1}{|z|}\right)$. Note that, if we denote the weighted Bergman kernel associated to $L_a^2(\mathbb{D}^*,\lambda)$ by $B_{\lambda}(z,\zeta)$, then we have
\begin{equation*}
B_{\lambda}(z,\zeta)=\sum_{k=-\infty}^{+\infty}\frac{1}{I(2k+1)}z^k\overline{\zeta}^k.
\end{equation*}
By \eqref{pos} and \eqref{neg}, we see that
\begin{align*}
B_{\lambda}(z,z)
&=\sum_{k=-\infty}^{-1}+\sum_{k=0}^{+\infty}\frac{1}{I(2k+1)}|z|^{2k}\\
&\approx\frac{|z|^{-2}}{I(-1)}+\sum_{k=2}^{+\infty}\frac{|z|^{-2k}}{\Gamma(2k-2)}+\sum_{k=0}^{+\infty}(k+1)|z|^{2k}\\
&\approx|z|^{-3}\sinh|z|^{-1}+\frac{1}{(1-|z|^2)^2}.
\end{align*}

Let $B_{\Ht}^{\mu}(z,\zeta)$ be the Bergman kernel on $\Ht$ with respect to the weight $\mu(z_1,z_2)=\exp\left(-\frac{1}{|z_1|}\right)$, and let $B_{\Ht}^{\mu'}(z,\zeta)$ be the Bergman kernel on $\Ht$ with respect to the weight $\mu'(z_1,z_2)=|z_1|^{t}$, where $t\in\rl$. By looking at the biholomorphism

\[
\Phi:\Ht\to\mathbb{D}^*\times\mathbb{D},~~\text{ where  }~~\Phi(z_1,z_2)=\left(z_1,\frac{z_2}{z_1}\right)
\]
and by the biholomorphic equivalence of kernels, see \cite[Corollary 2.4 and Lemma 3.1]{Chen14}, we conclude that

\begin{align*}
B_{\Ht}^{\mu'}(z,\zeta)
&=\det J_{\cx}\Phi(z)\overline{\det J_{\cx}\Phi(\zeta)}B_{\Dd}^{\mu'\circ\Phi^{-1}}(\Phi(z),\Phi(\zeta))\\
&=\frac{1}{z_1}\cdot\frac{1}{\overline{\zeta}_1}\cdot B_{\mathbb{D}^*}^{\mu'\circ\Phi^{-1}}(z_1,\zeta_1)\cdot B_{\Ud}\left(\frac{z_2}{z_1},\frac{\zeta_2}{\zeta_1}\right)\\
&=\left[\frac{s}{2}\cdot\frac{1}{(z_1\overline{\zeta}_1)^{k+2}}+\left(1-\frac{s}{2}\right)\cdot\frac{1}{(z_1\overline{\zeta}_1)^{k+1}}\right]\cdot\frac{1}{(1-z_1\overline{\zeta}_1)^2}\cdot\frac{1}{\left(1-\frac{z_2}{z_1}\cdot\frac{\overline{\zeta}_2}{\overline{\zeta}_1}\right)^2}
\end{align*}
where $t=s+2k$, $k\in\Z$ and $s\in(0,2]$, and that
\begin{align*}
B_{\Ht}^{\mu}(z,\zeta)
&=\det J_{\cx}\Phi(z)\overline{\det J_{\cx}\Phi(\zeta)}B_{\Dd}^{\mu\circ\Phi^{-1}}(\Phi(z),\Phi(\zeta))\\
&=\frac{1}{z_1}\cdot\frac{1}{\overline{\zeta}_1}\cdot B_{\mathbb{D}^*}^{\mu\circ\Phi^{-1}}(z_1,\zeta_1)\cdot B_{\Ud}\left(\frac{z_2}{z_1},\frac{\zeta_2}{\zeta_1}\right)\\
&=B_{\lambda}(z_1,\zeta_1)\cdot\frac{1}{z_1}\cdot\frac{1}{\overline{\zeta}_1}\cdot\frac{1}{\left(1-\frac{z_2}{z_1}\cdot\frac{\overline{\zeta}_2}{\overline{\zeta}_1}\right)^2}.
\end{align*}

Therefore, on the diagonal line, we have
\begin{equation*}
B_{\Ht}^{\mu'}(z,z)=\left[\frac{s}{2}\cdot\frac{1}{|z_1|^{2k+4}}+\left(1-\frac{s}{2}\right)\cdot\frac{1}{|z_1|^{2k+2}}\right]\cdot\frac{1}{(1-|z_1|^2)^2}\cdot\frac{1}{\left(1-\frac{|z_2|^2}{|z_1|^2}\right)^2}
\end{equation*}
and
\begin{equation*}
B_{\Ht}^{\mu}(z,z)\approx\left[|z_1|^{-3}\sinh|z_1|^{-1}+\frac{1}{(1-|z_1|^2)^2}\right]\cdot\frac{1}{|z_1|^2}\cdot\frac{1}{\left(1-\frac{|z_2|^2}{|z_1|^2}\right)^2}.
\end{equation*}

Recall that $\delta(z)=|z_1|$, hence
\begin{align*}
B_{\Ht}^{\mu'}(z,z)\cdot\delta(z)^{\tau}
&=\left[\frac{s}{2}\cdot\frac{1}{|z_1|^{2k+4}}+\left(1-\frac{s}{2}\right)\cdot\frac{1}{|z_1|^{2k+2}}\right]\cdot\frac{1}{(1-|z_1|^2)^2}\cdot\frac{1}{\left(1-\frac{|z_2|^2}{|z_1|^2}\right)^2}\cdot|z_1|^{\tau}\\
&\le |z_1|^{\tau-(2k+4)}\cdot\frac{1}{(1-|z_1|^2)^2}\cdot\frac{1}{\left(1-\frac{|z_2|^2}{|z_1|^2}\right)^2}\\
&\le C
\end{align*}
for some $\tau\in\rl$ as $z\to0$ inside any cone $V_{\gamma}=\{(z_1,z_2)\in\mathbb{C}^2:~\gamma |z_2|<|z_1|\}$, where $\gamma>1$. Whereas, for the exponential weight $\mu$ on $\Ht$,
\begin{align*}
B_{\Ht}^{\mu}(z,z)\cdot\delta(z)^{\tau}
&\approx\left[|z_1|^{-3}\sinh|z_1|^{-1}+\frac{1}{(1-|z_1|^2)^2}\right]\cdot\frac{1}{|z_1|^2}\cdot\frac{1}{\left(1-\frac{|z_2|^2}{|z_1|^2}\right)^2}\cdot|z_1|^{\tau}\\
&\approx|z_1|^{\tau-5}\exp\left(\frac{1}{|z_1|}\right)\cdot\frac{1}{\left(1-\frac{|z_2|^2}{|z_1|^2}\right)^2}
\end{align*}
is unbounded for all $\tau\in\rl$ as $z\to0$ inside any cone $V_{\gamma}$, where $\gamma>1$.

\begin{remark}
If we correct the distance $\delta(z)$ by a factor of the weight $\mu(z)$, then we get
\begin{equation*}
B_{\Ht}^{\mu}(z,z)\cdot\mu(z)\cdot\delta(z)^{\tau}\approx|z_1|^{\tau-5}\cdot\left(1-\frac{|z_2|^2}{|z_1|^2}\right)^{-2}\le C
\end{equation*}
for $\tau\ge5$ as $z\to0$ inside any cone $V_{\gamma}$, where $\gamma>1$.
\end{remark}


\section{Proof of Theorem \ref{infinity}}

In view of the Forelli-Rudin inflation principle, see for example \cite[Proposition 4.4]{ZeytuncuTran}, unboundedness of  $\mathbf{B}_{\mathbb{H}_{\infty}}$ can be deduced from unboundedness of the corresponding weighted Bergman projection on the punctured disc $\mathbb{D}^*$. Note that, in \cite{ZeytuncuTran}, the smooth radial weights vanish at infinite order at the boundary of the unit disc. Hence, integration by parts plays a crucial role in obtaining asymptotics of the moment function of these weights. Whereas, here the radial weight $\lambda(z)=\exp\left(-\frac{1}{|z|}\right)$ is vanishing of any order only at the origin, the non-smooth boundary of the punctured disc $\mathbb{D}^*$. Since we do not have vanishing conditions on the smooth boundary of the punctured disc, we do not base our argument on successive integration by parts. Instead we use the asymptotic information from the second section.

\begin{proof}[Proof of Theorem \ref{infinity}]
By \cite[Proposition 4.4]{ZeytuncuTran}, for $p\in(1,\infty)$ if the weighted Bergman projection $\Bl$ is unbounded on the weighted space $L^p(\mathbb{D}^*,\lambda)$, then the Bergman projection $\mathbf{B}_{\mathbb{H}_{\infty}}$ associated to $\mathbb{H}_{\infty}$ is unbounded on $L^p(\mathbb{H}_{\infty})$. In Section 2, we have already showed that $\Bl$ is unbounded on $L^p(\mathbb{D}^*,\lambda)$ for $p>2$, therefore we conclude that $\mathbf{B}_{\mathbb{H}_{\infty}}$ is bounded on $L^p(\mathbb{H}_{\infty})$ if and only if $p=2$.
\end{proof}


\end{document}